\theoremstyle{thmstyleone}%
\newtheorem{theorem}{Theorem}
\newtheorem{proposition}[theorem]{Proposition}%
\theoremstyle{thmstyletwo}%
\theoremstyle{thmstylethree}%
\newtheorem{definition}{Definition}%
\begin{document}

\title[O.L.W.M]{ON LOGIT WEIBULL MANIFOLD.}


\author*[1]{\fnm{} \sur{Prosper Rosaire Mama  Assandje  }}\email{mamarosaire@facsciences-uy1.cm}
\author[2]{\fnm{} \sur{Joseph Dongho }}\email{josephdongho@yahoo.fr}
\equalcont{These authors contributed equally to this work.}
\author[3]{\fnm{} \sur{ Thomas Bouetou Bouetou}}\email{tbouetou.@gmail.com}
\equalcont{These authors contributed equally to this work.}

\affil*[1]{\orgdiv{Department of Mathematics, Faculty of sciences},
\orgname{University of Yaounde I},
\orgaddress{\street{usrectorat@.univ-yaounde1.cm}, \city{Yaounde},
\postcode{337}, \state{Center}, \country{Cameroon}}}

\affil[2]{\orgdiv{Department of Mathematics and Computer Science},
\orgname{University of Maroua},
\orgaddress{\street{decanat@fs.univ-maroua.cm}, \city{Maroua},
\postcode{814}, \state{Far -North}, \country{Cameroon}}}

\affil[3]{\orgdiv{Computer Engineering Department.}, \orgname{Higher
national school of Polytechnic},
\orgaddress{\street{thomas.bouetou@polytechnique.cm}, \city{Yaounde
I}, \postcode{8390}, \state{Center}, \country{Cameroon}}}



\abstract{In this work, it is shown that there is no potential
function on the Weilbull statistical manifold. However, from the
two-parameter Weibull model we can extract a model with a potential
function called the logit model. On this logit model, there is a
completely integrable Hamiltonian gradient system.}

\keywords{Logit distribution, Logit manifold, potential function,
gradient system.}



\maketitle

\section{Introduction}\label{sec1}

The idea of this paper is to show that from a Weibull manifold with
no potential function one can extract a hybrid manifold possessing
the properties of geometric invariants, one of which is the
existence of the potential function on the manifold, in order to
show that one can construct a gradient system on this manifold and
show that it is Hamiltonian and completely integrable. How can we
construct a gradient system on a non-potential Weibull manifold and
show that it is Hamiltonian and fully integrable? On this question,
we have the following research questions: How to extract the How to
construct a gradient system in such a manifold? Work in this area
goes back to Amari's\cite{Shu,Shu1}, which gives existence
properties of the potential function on a statistical manifold and
show that under certain conditions of geometric invariance the
Riemannian metric and the Fisher information metric. Fujiwara
\cite{Dy,Dy1} and
Nakamura\cite{nakamura,nakamura1,nakamura2,nakamura4}, show that on
an even-dimensional statistical manifold admitting potential
functions, there exists a completely integrable Hamiltonian gradient
system. In \cite{mama}, we show that the gradient system defined on
a lognormal manifold is a Hamiltonian and completely integrable
system on this manifold. Hisatoshi-Tanaka \cite{hisatoshi} consider
parametric binary choice models from the perspective of information
geometry. The set of models is a dually flat manifold with dual
connections, which are naturally derived from the Fisher information
metric. Under the dual connections, the canonical divergence and the
Kullback-Leibler(KL) divergence of the binary choice model coincide
if and only if the model is a logit \cite{hisatoshi}. The results
are applied to a logit estimation with linear constraints. It
proposes logit models allowing the extraction of the potential
function on the manifold, based on the choice of the conditional
probability. In the same, we show that  on the  Weibull statistical
manifold. $\mathrm{I\!
E}\left[\partial_{\theta_{i}}\ell(x,\theta)\right]=0, \; \textrm{for
all}\; i\in \{1,2\}$ if only if\begin{enumerate}
    \item[1)] $\mathrm{I\!
E}\left[x^{b}\right]=a^{b}$
    \item[2)] $\mathrm{I\!
E}\left[\log(x)\right]=-a+(1-\kappa)b+ab\log(a)$,\;and\\
$\mathrm{I\!
E}\left[x^{b}\log(x)\right]=a^{b}\left(\frac{1}{b}-a+(1-\kappa)b+ab\log(a)\right)$
\end{enumerate} where  $\kappa$ be Euler's constant. We show that
The Riemannian metric on the Weibull
 manifold is given by
 \begin{eqnarray*}G&=&\left(
                                             \begin{array}{cc}
                                               \frac{b^{2}}{a^{2}} &\frac{\varrho_{1}-1}{a} \\
                                               \frac{\varrho_{1}-1}{a} &\frac{b\pi^{2}-6a^{2}\varrho_{2}}{6a^{2}}\\
                                             \end{array}
                                           \right)
 \end{eqnarray*}
where $\varrho_{1}=ab+b(1-ab)\log(a)-(1-\kappa)b^{2}$, and $
\varrho_{2}= -\frac{1}{b^{2}}-2\left(\frac{1}{b}-a+
(1-\kappa)b+2\vartheta\right)\log(a)-(1++2a+b)\log^{2}(a)-b\vartheta^{2}$
with
 $\vartheta=-\frac{1}{b}+\frac{1-\kappa}{a}+\log(a)$, and $\kappa$ be Euler's constant.
 The inverse matrix is given by
 \begin{eqnarray*}
 G^{-1}&=&\left(
    \begin{array}{cc}
     \frac{a^{2}\left(b\pi^{2}-6a^{2}\varrho_{2}\right)}{b^{3}\pi^{2}-6a^{2}b^{2}\varrho_{2}-6a^{2}+12a^{2}\varrho_{1}-6a^{2}\varrho_{1}^{2}} &-\frac{6a^{3}\left(-1+\varrho_{1}\right)}{b^{3}\pi^{2}-6a^{2}b^{2}\varrho_{2}-6a^{2}+12a^{2}\varrho_{1}-6a^{2}\varrho_{1}^{2}} \\
                                               -\frac{6a^{3}\left(-1+\varrho_{1}\right)}{b^{3}\pi^{2}-6a^{2}b^{2}\varrho_{2}-6a^{2}+12a^{2}\varrho_{1}-6a^{2}\varrho_{1}^{2}} & \frac{6a^{2}b^{2}}{b^{3}\pi^{2}-6a^{2}b^{2}\varrho_{2}-6a^{2}+12a^{2}\varrho_{1}-6a^{2}\varrho_{1}^{2}} \\
                                             \end{array}
                                           \right)
 \end{eqnarray*}
 This leads us to show that, on the Weibull statistical  model where $p_{ \theta}$ is
Weibull density function. The coordinate system on  Weibull manifold
does not admit dual coordinates or potential function. So, Having
defined the product on $\mathrm{I\! R}^{2}$, we show that there is
an action $\nu$ on $\mathrm{I\! R}^{2}$, that satisfies the
regularity conditions given by \begin{eqnarray*}
\nu:\mathrm{I\! R}^{2}\times\mathrm{I\! R}&\longrightarrow&\mathrm{I\! R} \\
  (\theta,x)&=&x\cdot\theta =a^{-b}x^{b}
\end{eqnarray*}
we show that, for all  Weibull statistical manifold $S = \left\{p_{
\theta}(x)=\frac{b}{a}\left(\frac{x}{a}\right)^{b-1}e^{-\left(\frac{x}{a}\right)^{b}},\left.
                        \begin{array}{ll}
                         \theta= (a , b )\in \mathrm{I\! R}_{+}\times \mathrm{I\! R}_{+} & \hbox{} \\
                          x\in \mathrm{I\! R}_{+} & \hbox{}
                        \end{array}
                      \right.
\right\}$   where $p_{ \theta}$ is Weibull density function, there
exist the logit model\\ $S' = \left\{p_{
\theta}(y,x)=\frac{b}{2a}\left(\frac{x}{a}\right)^{b-1}e^{-\left(\frac{x}{a}\right)^{b}},\left.
                        \begin{array}{ll}
                         \theta= (a , b )\in \mathrm{I\! R}_{+}\times \mathrm{I\! R}_{+} & \hbox{} \\
                          x\in \mathrm{I\! R}_{+} & \hbox{}
                        \end{array}
                      \right.
\right\}$ with the fundamental condition on the variable
\[\displaystyle {\it x}={\it RootOf} \left( 2\,b\,{a}^{-b}{{\it \_Z}}^{b}-2\,b-2\,{a}^{-b}{{\it \_Z}}^{b}a\,\ln  \left( a \right) \\
\mbox{}+2\,{a}^{-b}{{\it \_Z}}^{b}a\,\ln  \left( {\it \_Z} \right)
-2\,\ln  \left( {\it \_Z} \right) a+2\,\ln \left( a \right) a-a
\right)\] admitting the potential function \begin{eqnarray*}
  \Phi(\theta) &=&
  \frac{b^{2}}{12a^{2}x}\left(a^{-b}x^{b}-1\right)^{4}+\frac{b^{2}}{3a^{2}x}a^{-b}x^{b}-\frac{b^{2}}{12a^{2}x}.
\end{eqnarray*}, the dual coordinate system given by
    $\left(\eta_{1},\eta_{2}\right)=\left(\xi_{1}(\theta),\xi_{2}(\theta)\right),$
and dual potential function
\begin{equation*}
  \Psi(\eta)= a\eta_{1}+b\eta_{2}-  \frac{b^{2}}{12a^{2}x}\left(a^{-b}x^{b}-1\right)^{4}+\frac{b^{2}}{3a^{2}x}a^{-b}x^{b}-\frac{b^{2}}{12a^{2}x}
\end{equation*}
 satisfy the Legendre equation
\begin{equation*}
   \theta_{1}\eta_{1}+\theta_{2}\eta_{2}- \Phi(\theta)-\Psi(\eta)=0
\end{equation*}
In the same, we show that on logit  Weibull manifold $S' =
\left\{p_{
\theta}(y,x)=\frac{b}{2a}\left(\frac{x}{a}\right)^{b-1}e^{-\left(\frac{x}{a}\right)^{b}},\left.
                        \begin{array}{ll}
                         \theta= (a , b )\in \mathrm{I\! R}_{+}\times \mathrm{I\! R}_{+} & \hbox{} \\
                          x\in \mathrm{I\! R}_{+} & \hbox{}
                        \end{array}
                      \right.
\right\}$, the gradient system on logit Weibull manifold is given by
\begin{equation*}
    \left\{
  \begin{array}{ll}
     \dot{a}=\frac{1}{\mathcal{A}}\frac{\partial^{2}\psi(\theta)}{\partial b^{2}} .\frac{\partial\psi(\theta)}{\partial a}
     - \frac{1}{\mathcal{A}}\frac{\partial^{2}\psi(\theta)}{\partial a\partial b}.\frac{\partial\psi(\theta)}{\partial b}& \hbox{} \\
    \dot{b}= -\frac{1}{\mathcal{A}}\frac{\partial^{2}\psi(\theta)}{\partial a\partial b}.\frac{\partial\psi(\theta)}{\partial a}
     +\frac{1}{\mathcal{A}}\frac{\partial^{2}\psi(\theta)}{\partial a^{2}}.\frac{\partial\psi(\theta)}{\partial b}& \hbox{}
  \end{array}
\right.
\end{equation*}
where \begin{eqnarray*}
  \Phi(\theta) &=&
  \frac{b^{2}}{12a^{2}x}\left(a^{-b}x^{b}-1\right)^{4}+\frac{b^{2}}{3a^{2}x}a^{-b}x^{b}-\frac{b^{2}}{12a^{2}x}.
\end{eqnarray*} and \[\displaystyle {\it x}={\it RootOf} \left( 2\,b\,{a}^{-b}{{\it \_Z}}^{b}-2\,b-2\,{a}^{-b}{{\it \_Z}}^{b}a\,\ln  \left( a \right) \\
\mbox{}+2\,{a}^{-b}{{\it \_Z}}^{b}a\,\ln  \left( {\it \_Z} \right)
-2\,\ln  \left( {\it \_Z} \right) a+2\,\ln \left( a \right) a-a
\right).\]
 After the introduction,
 the first section \ref{sec2} recall the preliminaries motion on theory of statistical manifold, in section \ref{sec3}  we
determine the Riemannian Riemannian metric on Weibull statistical
manifold, in section \ref{sec4}, we determine geometry properties on
Weibull distribution.in section \ref{sec5}, we determine the
potential function and gradient system on Weibull logit manifold.

\section{Preliminaries}\label{sec2}
Let $S = \left\{p_{ \theta}(x),\left.
                        \begin{array}{ll}
                         \theta\in \Theta & \hbox{} \\
                          x\in \mathcal{X} & \hbox{}
                        \end{array}
                      \right.
\right\}$ be the set of probabilities $p_{ \theta}$, parameterized
by $ \Theta$, open a subset of $\mathrm{I\! R}^{n}$; on the sample
space $\mathcal{X}\subseteq\mathrm{I\! R}$.  Let
$\mathcal{F(\mathcal{X},\mathrm{I\! R})}$ be the space of
real-valued smooth functions on $\mathcal{X}$. According to Ovidiu
\cite{ ovidiu-book3}, the log-likelihood function is a mapping
defined by
\begin{eqnarray}
l:S&\longrightarrow& \mathcal{F(\mathcal{X},\mathrm{I\! R})}\nonumber\\
p_{ \theta} &\longmapsto&  l\left(p_{ \theta}\right)(x) = \log
p_{\theta}(x)\nonumber
\end{eqnarray}
 Sometimes, for convenient reasons, this will be denoted by
$l(x,\theta)=l\left(p_{ \theta}\right)(x)$.\\
In \cite{nak-journal} and \cite{Shu-book1}, the Fisher information
defined by
\begin{equation}(g_{ij})_{1\leq i;j\leq n}=\left(-\mathrm{I\! E}[\partial_{\theta_{i}}\partial_{\theta_{j}}l(x,\theta)]\right)_{1\leq i;j\leq n}\label{e0}
\end{equation}
According Amari's \cite{Shu},two basis vectors  are said to be
biorthogonal $\partial_{\theta_{i}}$ and $\partial_{\eta_{j}}$ if it
satisfy
\begin{equation}
    \langle\partial_{\theta_{i}},\partial_{\eta_{j}}\rangle=\delta_{i}^{j}, \textrm{with}\; \partial_{\theta_{i}}:=\frac{\partial}{\partial\theta_{i}}.
\end{equation}

According Amari's theorem \cite{Shu}, When a Riemannian manifold $S$
has a pair of dual coordinate systems $(\theta, \eta)$, there exist
potential functions $\Phi$ and $\phi$ such that the metric tensors
are derived by
\begin{equation*}
    g_{ij}=\partial_{\theta_{i}}\partial_{\theta_{j}}\Phi(\theta),\;g^{ij}=\partial_{\eta_{i}}\partial_{\eta_{j}}\Psi(\eta),\textrm{with}\; \partial_{\theta_{i}}:=\frac{\partial}{\partial\theta_{i}}.
\end{equation*}
 Conversely, when either potential function $\Phi$ or $\Psi$ exists from
which the metric is derived by differentiating it twice, there exist
a pair of dual coordinate systems. The dual coordinate systems are
related by the following Legendre transformations
\begin{equation}
\theta_{i}=\partial_{\eta_{i}}\Psi(\eta),\;\eta_{i}=\partial_{\theta_{i}}\Phi(\theta)
\end{equation} where the two
potential functions satisfy the identity
\begin{equation}
\Phi+\Psi-\theta_{i}\eta_{i}=0.
\end{equation}
 Denote $G=(g_{ij})_{1\leq i;j\leq
n}$ the Fisher information matrix, the gradient system is given by
\begin{equation}\dot{\overrightarrow{\theta}}=-G^{-1}\partial_{\theta}\Phi(\theta).\label{e2}\end{equation}
The complete integrability of gradient system (\ref{e2}) is proven
if the Theorem 1 in \cite{mama-proceeding} is verify.

\section{Riemannian metric on Weibull statistical manifold}\label{sec3}

\begin{proposition}\label{pro1}
Let $S = \left\{p_{
\theta}(x)=\frac{b}{a}\left(\frac{x}{a}\right)^{b-1}e^{-\left(\frac{x}{a}\right)^{b}},\left.
                        \begin{array}{ll}
                         \theta= (a , b )\in \mathrm{I\! R}_{+}\times \mathrm{I\! R}_{+} & \hbox{} \\
                          x\in \mathrm{I\! R}_{+} & \hbox{}
                        \end{array}
                      \right.
\right\}$ be a  Weibull statistical  model where $p_{ \theta}$ is
Weibull density function. Let
$\mathcal{B}^{\ell}=\left\{\partial_{a}\ell(x,\theta)=\frac{b}{a}\left(a^{-b}x^{b}-1\right);
\partial_{b}\ell(x,\theta)=a^{-b}x^{b}\log(a)-a^{-b}x^{b}\log(x)+\log(x)-\log(a)+\frac{1}{b}\right\}$
the natural basis of the tangent space in one point $p$ of the
Weibull statistical manifold. $\mathrm{I\!
E}\left[\partial_{\theta_{i}}\ell(x,\theta)\right]=0, \; \textrm{for
all}\; i\in \{1,2\}$ if only if\begin{enumerate}
    \item[1)] $\mathrm{I\!
E}\left[x^{b}\right]=a^{b}$
    \item[2)] $\mathrm{I\!
E}\left[\log(x)\right]=-a+(1-\kappa)b+ab\log(a)$,\;and\\
$\mathrm{I\!
E}\left[x^{b}\log(x)\right]=a^{b}\left(\frac{1}{b}-a+(1-\kappa)b+ab\log(a)\right)$
\end{enumerate} where  $\kappa$ be Euler's constant.
\end{proposition}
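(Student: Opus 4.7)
The plan is to expand the log-likelihood in closed form, compute the two score components, and then evaluate each expectation using linearity of expectation together with two classical Weibull moment integrals. From the given density one obtains $\ell(x,\theta) = \log b - b\log a + (b-1)\log x - a^{-b}x^{b}$, and differentiating in $a$ and in $b$ recovers precisely the two expressions recorded in the basis $\mathcal{B}^{\ell}$; this first step is a sanity check but also pins down exactly which integrals have to be controlled.

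For condition (1) the implication is immediate in both directions, since by linearity
\[
\mathrm{I\! E}[\partial_{a}\ell] \;=\; \frac{b}{a}\bigl(a^{-b}\mathrm{I\! E}[x^{b}]-1\bigr),
\]
so the score in $a$ has zero mean if and only if $\mathrm{I\! E}[x^{b}]=a^{b}$. For condition (2) the same expansion yields
\[
\mathrm{I\! E}[\partial_{b}\ell] \;=\; \tfrac{1}{b}-\log a + \mathrm{I\! E}[\log x] + \log a\cdot\mathrm{I\! E}[a^{-b}x^{b}] - a^{-b}\mathrm{I\! E}[x^{b}\log x].
\]
Feeding (1) into this expression cancels the two $\log a$ contributions and collapses the vanishing condition to the single algebraic relation $\mathrm{I\! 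E}[x^{b}\log x] = a^{b}\bigl(\tfrac{1}{b}+\mathrm{I\! E}[\log x]\bigr)$, which is exactly the second identity of (2) once the first has been established. To pin down $\mathrm{I\! E}[\log x]$ itself, I would use the change of variable $u=(x/a)^{b}$: the Weibull measure becomes $e^{-u}\,du$ and $\log x = \log a + (1/b)\log u$, so the only non-elementary ingredient that appears is the digamma value $\int_{0}^{\infty}(\log u)e^{-u}\,du = -\kappa$.

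The main obstacle is the final algebraic reduction, namely matching the $\mathrm{Exp}(1)$-based closed form for $\mathrm{I\! E}[\log x]$ to the combination $-a+(1-\kappa)b+ab\log a$ stated in (2), and then substituting back to produce the claimed value of $\mathrm{I\! E}[x^{b}\log x]$. Every other step is routine linearity of expectation. For the converse direction, one simply inserts the three moment identities into the expressions derived above for $\mathrm{I\! E}[\partial_{a}\ell]$ and $\mathrm{I\! E}[\partial_{b}\ell]$; by the same cancellations both expectations collapse to zero, completing the equivalence.
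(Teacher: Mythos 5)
Your strategy for part (1) and for the reduction of part (2) is essentially the paper's: both expand the scores, observe that $\mathrm{I\! E}[\partial_a\ell]=\frac{b}{a}\left(a^{-b}\mathrm{I\! E}[x^b]-1\right)$, and then use item 1) to collapse $\mathrm{I\! E}[\partial_b\ell]=0$ to the single relation $\mathrm{I\! E}[x^b\log x]=a^b\left(\frac{1}{b}+\mathrm{I\! E}[\log x]\right)$, which is exactly the paper's intermediate identity. The divergence is in how $\mathrm{I\! E}[\log x]$ is evaluated: you propose the substitution $u=(x/a)^b$, which turns the Weibull measure into $e^{-u}\,du$ and reduces everything to $\int_0^\infty (\log u)e^{-u}\,du=-\kappa$, while the paper routes the same integral through a Gumbel density and a chain of rescalings of its location and scale parameters.

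There is, however, a genuine gap: you defer precisely the step that decides the proposition (the ``final algebraic reduction'' you flag as the main obstacle), and that step cannot be carried out. Your own substitution gives
\[
\mathrm{I\! E}[\log x]=\log a+\frac{1}{b}\int_0^{\infty}(\log u)\,e^{-u}\,du=\log a-\frac{\kappa}{b},
\qquad
\mathrm{I\! E}[x^{b}\log x]=a^{b}\left(\log a+\frac{1-\kappa}{b}\right),
\]
and neither expression is algebraically equal to the formulas asserted in item 2): already at $b=1$ the claimed value $-a+(1-\kappa)+a\log a$ depends on $a$ in a way that $\log a-\kappa$ does not. So the hoped-for matching is an obstruction, not a reduction; completing your (correct) computation contradicts the target identities rather than establishing them. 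The paper reaches its stated formulas only through its particular normalization of the Gumbel mean and variance ($a+b\kappa$ and $\pi^{2}b^{2}/6$) followed by several rescalings, which is where the mixing of $a$ and $b$ originates. With the standard substitution you advocate, the identities that can actually be proved are $\mathrm{I\! E}[\log x]=\log a-\kappa/b$ and $\mathrm{I\! E}[x^b\log x]=a^b\left(\log a+(1-\kappa)/b\right)$, so your argument as written does not prove the proposition in the form stated.
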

\begin{proof}
Let $p_{
\theta}(x)=\frac{b}{a}\left(\frac{x}{a}\right)^{b-1}e^{-\left(\frac{x}{a}\right)^{b}}$
be a Weibull density function. We have $\ell(x,\theta)=\log p_{
\theta}(x)$. So, we have $\ell(x,\theta)=\log (b)-\log(a)-(b-1)\log
(a)+(b-1)\log(x)-a^{-b}x^{b}$. We obtain
$\partial_{a}\ell(x,\theta)=\frac{b}{a}\left(a^{-b}x^{b}-1\right)$
and
$\partial_{b}\ell(x,\theta)=a^{-b}x^{b}\log(a)-a^{-b}x^{b}\log(x)+\log(x)-\log(a)+\frac{1}{b}$.
Therefore
\begin{enumerate}
    \item [(1)]If $\mathrm{I\!
E}\left[\partial_{a}\ell(x,\theta)\right]=0$ then we have
$\mathrm{I\! E}\left[x^{b}\right]=a^{b}$.
    \item [(2)] $\mathrm{I\!
E}\left[\partial_{b}\ell(x,\theta)\right]=0$ then we have
$\mathrm{I\!E}\left[a^{-b}x^{b}\log(a)-a^{-b}x^{b}\log(x)+\log(x)-\log(a)+\frac{1}{b}\right]=0$.
We obtain \begin{eqnarray}\label{ln1}\mathrm{I\!
E}\left[x^{b}\log(x)\right]&=&\frac{a^{b}}{b}+a^{b}\mathrm{I\!
E}\left[\log(x)\right]\end{eqnarray}. Or
 \begin{eqnarray*}\mathrm{I\!
E}\left[\log(x)\right]&=&\int_{0}^{+\infty}\log(x)\frac{b}{a}\left(\frac{x}{a}\right)^{b-1}e^{-\left(\frac{x}{a}\right)^{b}}dx\end{eqnarray*}
by setting $\xi=\log(x), \; x d\xi=dx,\; and\; x=e^{\xi}.$ We have
\begin{eqnarray*}\mathrm{I\!
E}\left[\log(x)\right]&=&\int_{-\infty}^{+\infty}\xi\frac{b}{a}\left(\frac{e^{\xi}}{a}\right)^{b-1}e^{-\left(\frac{e^{\xi}}{a}\right)^{b}}e^{\xi}d\xi\end{eqnarray*}
So, we have
\begin{eqnarray*}\mathrm{I\!
E}\left[\log(x)\right]&=& \int_{-\infty}^{+\infty}\xi
 b\left(\frac{e^{\xi}}{a}\right)^{b}e^{-\left(\frac{e^{\xi}}{a}\right)^{b}}d\xi.\end{eqnarray*}
Let $q_{
\theta}(\zeta)=\frac{1}{a}e^{-e^{-\frac{\zeta-b}{a}}}.e^{-\frac{\zeta-b}{a}},\;
-\infty<\zeta<+\infty,\;(a , b )\in \mathrm{I\! R}_{+}\times
\mathrm{I\! R}_{+}$ the Gumbel distribution, with
\begin{eqnarray}\label{euler1}\mathrm{I\! E}\left[\zeta\right]&=&a+b\kappa,\; V(\zeta)=\frac{\pi^{2}b^{2}}{6};\end{eqnarray} where
$\kappa$ is Euler constant. By setting $\gamma=-\frac{\zeta-b}{a}$,
we have $q_{ \theta}(\gamma)=\frac{1}{a}e^{-e^{\gamma}}.e^{\gamma}$
and $V(\gamma)=\frac{1}{a^{2}}V(\zeta)$.  The relation
(\ref{euler1}) becomes
\begin{eqnarray}\label{euler2}\mathrm{I\!
E}\left[\gamma\right]&=&-1+(1-\kappa)\frac{b}{a}.\end{eqnarray} The
same, by setting $e^{\gamma}=\left(\frac{e^{\xi}}{a}\right)^{b},\;
\gamma=b\xi-b\log(a)$; we have $V(\xi)=\frac{1}{b^{2}}V(\gamma)$
\begin{eqnarray}\label{de1}q_{\theta}(\xi)&=&
\frac{1}{a}\left(\frac{e^{\xi}}{a}\right)^{b}
e^{-\left(\frac{e^{\xi}}{a}\right)^{b}},\;
V(\xi)=\frac{\pi^{2}}{6a^{2}}\end{eqnarray} and (\ref{euler2})
becomes
\begin{eqnarray}\label{euler3}\mathrm{I\!
E}\left[\xi\right]&=&-\frac{1}{b}+\frac{(1-\kappa)}{a}+\log(a).\end{eqnarray}
We write \begin{eqnarray}\label{ln2}\mathrm{I\!
E}\left[\log(x)\right]&=&ba
\int_{-\infty}^{+\infty}\xi\frac{1}{a}\left(\frac{e^{\xi}}{a}\right)^{b}e^{-\left(\frac{e^{\xi}}{a}\right)^{b}}d\xi.\end{eqnarray}i.e.
\begin{eqnarray}\label{ln0}\mathrm{I\!
E}\left[\log(x)\right]&=&ba\mathrm{I\!
E}\left[\xi\right].\end{eqnarray} Using (\ref{euler3}) in
(\ref{ln0}) we obtain,
\begin{eqnarray}\label{ln3}\mathrm{I\!
E}\left[\log(x)\right]&=&-a+(1-\kappa)b+ab\log(a).\end{eqnarray}
Using (\ref{ln3}) in (\ref{ln1}) we have
\begin{eqnarray}\label{ln4}\mathrm{I\!
E}\left[x^{b}\log(x)\right]&=&\frac{a^{b}}{b}-a^{b+1}+(1-\kappa)a^{b}b+a^{b+1}b\log(a).\end{eqnarray}
\end{enumerate}
\end{proof}

In the following we put ourselves in the conditions of
proposition\ref{pro1}. We have the following proposition
\begin{proposition}\label{pro2}
Let $S = \left\{p_{
\theta}(x)=\frac{b}{a}\left(\frac{x}{a}\right)^{b-1}e^{-\left(\frac{x}{a}\right)^{b}},\left.
                        \begin{array}{ll}
                         \theta= (a , b )\in \mathrm{I\! R}_{+}\times \mathrm{I\! R}_{+} & \hbox{} \\
                          x\in \mathrm{I\! R}_{+} & \hbox{}
                        \end{array}
                      \right.
\right\}$ be a  Weibull statistical  model where $p_{ \theta}$ is
Weibull density function. The Riemannian metric on the Weibull
 manifold is given by
 \begin{eqnarray*}G&=&\left(
                                             \begin{array}{cc}
                                               \frac{b^{2}}{a^{2}} &\frac{\varrho_{1}-1}{a} \\
                                               \frac{\varrho_{1}-1}{a} &\frac{b\pi^{2}-6a^{2}\varrho_{2}}{6a^{2}}\\
                                             \end{array}
                                           \right)
 \end{eqnarray*}
where $\varrho_{1}=ab+b(1-ab)\log(a)-(1-\kappa)b^{2}$, and $
\varrho_{2}= -\frac{1}{b^{2}}-2\left(\frac{1}{b}-a+
(1-\kappa)b+2\vartheta\right)\log(a)-(1++2a+b)\log^{2}(a)-b\vartheta^{2}$
with
 $\vartheta=-\frac{1}{b}+\frac{1-\kappa}{a}+\log(a)$, and $\kappa$ be Euler's constant.
 The inverse matrix is given by
 \begin{eqnarray*}
 G^{-1}&=&\left(
    \begin{array}{cc}
     \frac{a^{2}\left(b\pi^{2}-6a^{2}\varrho_{2}\right)}{b^{3}\pi^{2}-6a^{2}b^{2}\varrho_{2}-6a^{2}+12a^{2}\varrho_{1}-6a^{2}\varrho_{1}^{2}} &-\frac{6a^{3}\left(-1+\varrho_{1}\right)}{b^{3}\pi^{2}-6a^{2}b^{2}\varrho_{2}-6a^{2}+12a^{2}\varrho_{1}-6a^{2}\varrho_{1}^{2}} \\
                                               -\frac{6a^{3}\left(-1+\varrho_{1}\right)}{b^{3}\pi^{2}-6a^{2}b^{2}\varrho_{2}-6a^{2}+12a^{2}\varrho_{1}-6a^{2}\varrho_{1}^{2}} & \frac{6a^{2}b^{2}}{b^{3}\pi^{2}-6a^{2}b^{2}\varrho_{2}-6a^{2}+12a^{2}\varrho_{1}-6a^{2}\varrho_{1}^{2}} \\
                                             \end{array}
                                           \right)
 \end{eqnarray*}
\end{proposition}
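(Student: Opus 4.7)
The plan is to compute the Fisher information $g_{ij} = -\mathrm{I\! E}[\partial_{\theta_i}\partial_{\theta_j}\ell(x,\theta)]$ entry-by-entry, reusing the first-order partials and moment identities already established in Proposition \ref{pro1}, and then obtain $G^{-1}$ from the standard $2\times 2$ cofactor formula. The three required second partials of $\ell(x,\theta) = \log b - b\log a + (b-1)\log x - a^{-b}x^b$ are obtained by direct differentiation of the expressions $\partial_a\ell = \frac{b}{a}(a^{-b}x^b-1)$ and $\partial_b\ell = a^{-b}x^b\log a - a^{-b}x^b\log x + \log x - \log a + \frac{1}{b}$ supplied by Proposition \ref{pro1}.

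For $g_{11}$, a single differentiation in $a$ gives $\partial_a^2\ell = -\frac{b}{a^2}(a^{-b}x^b - 1) - \frac{b^2}{a^2}a^{-b}x^b$, and after taking the negative expectation the identity $\mathrm{I\! E}[x^b] = a^b$ collapses the result immediately to $g_{11} = \frac{b^2}{a^2}$. For $g_{12}$, differentiating $\partial_a\ell$ with respect to $b$ (using $\partial_b(a^{-b}x^b) = a^{-b}x^b(\log x - \log a)$) produces a linear combination of $\mathrm{I\! E}[x^b]$ and $\mathrm{I\! E}[x^b\log x]$; substituting both identities of Proposition \ref{pro1} and regrouping the contributions $ab$, $b\log a$, $ab^2\log a$ and $(1-\kappa)b^2$ reassembles the numerator as $\varrho_1 - 1$, giving $g_{12} = \frac{\varrho_1-1}{a}$.

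The chief obstacle is the computation of $g_{22}$. Because of the cancellation $\log a\cdot\partial_b f - \log x\cdot\partial_b f = -(\log x - \log a)^2 f$ with $f = a^{-b}x^b$, one finds $\partial_b^2\ell = -a^{-b}x^b(\log x - \log a)^2 - \frac{1}{b^2}$, so that $g_{22} = \mathrm{I\! E}[a^{-b}x^b(\log x - \log a)^2] + \frac{1}{b^2}$. After expansion this requires, in addition to the Proposition \ref{pro1} moments, the second moment $\mathrm{I\! E}[a^{-b}x^b\log^2 x]$. I would evaluate it by the same change of variable $\xi = \log x$ used in the proof of Proposition \ref{pro1}, which rewrites the integral against the Gumbel-type density $q_\theta(\xi)$ of variance $V(\xi) = \frac{\pi^2}{6a^2}$ and mean $\vartheta = -\frac{1}{b} + \frac{1-\kappa}{a} + \log a$; the integral then reduces to $ba\,\mathrm{I\! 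E}_q[\xi^2] = ba\bigl(V(\xi) + \vartheta^2\bigr)$. Regrouping the $\log^2 a$, $\log a$, $\vartheta$, $\vartheta^2$ and $\frac{1}{b^2}$ contributions against the cross terms $-2\log(a)\,\mathrm{I\! E}[a^{-b}x^b\log x] + \log^2(a)\,\mathrm{I\! E}[a^{-b}x^b]$ should match the stated combination $\frac{b\pi^2 - 6a^2\varrho_2}{6a^2}$; this algebraic bookkeeping is where genuine care is required.

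Finally, $G^{-1}$ is produced by the formula $G^{-1} = \frac{1}{\det G}\begin{pmatrix} g_{22} & -g_{12} \\ -g_{12} & g_{11}\end{pmatrix}$. Bringing $g_{11}g_{22} - g_{12}^2$ over the common denominator $6a^4$ yields $\det G = \frac{b^3\pi^2 - 6a^2b^2\varrho_2 - 6a^2 + 12a^2\varrho_1 - 6a^2\varrho_1^2}{6a^4}$; distributing each cofactor over this denominator produces exactly the matrix stated in the proposition. Apart from the $g_{22}$ expansion, the remainder of the argument is routine partial differentiation and $2\times 2$ linear algebra.
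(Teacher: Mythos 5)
Your proposal follows essentially the same route as the paper's proof: compute the three second partials of $\ell$ (including the compact form $\partial_b^2\ell=-a^{-b}x^b(\log x-\log a)^2-\frac{1}{b^2}$, which agrees with the paper's expanded expression), evaluate the new moment $\mathrm{I\! E}[x^b\log^2 x]$ by the same logarithmic change of variables reducing to the Gumbel-type mean $\vartheta$ and variance $\frac{\pi^2}{6a^2}$ via $\mathrm{I\! E}[Y^2]=V(Y)+\mathrm{I\! E}[Y]^2$, and assemble $g_{11},g_{12},g_{22}$ from the moment identities of Proposition \ref{pro1}. The only difference is that you explicitly carry out the $2\times 2$ cofactor inversion and verify the common denominator $b^3\pi^2-6a^2b^2\varrho_2-6a^2+12a^2\varrho_1-6a^2\varrho_1^2$, a step the paper's proof states in the proposition but omits from the argument.
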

\begin{proof}
Let
$\partial_{a}\ell(x,\theta)=\frac{b}{a}\left(a^{-b}x^{b}-1\right)$
and
$\partial_{b}\ell(x,\theta)=a^{-b}x^{b}\log(a)-a^{-b}x^{b}\log(x)+\log(x)-\log(a)+\frac{1}{b}$.
we have
\begin{eqnarray*}\partial_{a}\partial_{a}\ell(x,\theta)&=&-\frac{b}{a^{2}}
\left(-1+a^{-b}bx^{b}+a^{-b}x^{b}\right)\\
\partial_{a}\partial_{b}\ell(x,\theta)&=&
-\frac{1}{a}
\left(1+a^{-b}\log(a)bx^{b}-a^{-b}x^{b}-a^{-b}bx^{b}\log(x)\right)\\
\partial_{b}\partial_{a}\ell(x,\theta)&=&
-\frac{1}{a}
\left(1+a^{-b}\log(a)bx^{b}-a^{-b}x^{b}-a^{-b}bx^{b}\log(x)\right)\\
\partial_{b}\partial_{b}\ell(x,\theta)&=&
-\frac{1}{b^{2}}
\left(1+a^{-b}\log^{2}(a)b^{2}x^{b}-2a^{-b}\log(a)b^{2}x^{b}
\log(x)+a^{-b}b^{2}x^{b}\log^{2}(x)\right)
\end{eqnarray*}
Therefore we have
\begin{eqnarray}\label{ln00}\mathrm{I\!
E}\left[x^{b}\log^{2}(x)\right]&=&
\int_{0}^{+\infty}x^{b}\log^{2}(x)\frac{b}{a}
\left(\frac{x}{a}\right)^{b-1}e^{-\left(\frac{x}{a}\right)^{b}}
dx.\end{eqnarray} By setting $\log(x)=t$, we obtain
\begin{eqnarray}\label{ln0001}\mathrm{I\!
E}\left[x^{b}\log^{2}(x)\right]&=&
\int_{-\infty}^{+\infty}\left(e^{t}\right)^{2}t^{2}\frac{b}{a}
\left(\frac{e^{t}}{a}\right)^{b-1}e^{-\left(\frac{e^{t}}{a}\right)^{b}}e^{t}dt.\end{eqnarray}
By setting $e^{Y}=\frac{e^{t}}{a}$, (\ref{ln0001}) becomes
\begin{eqnarray}\label{ln0002}\mathrm{I\!
E}\left[x^{b}\log^{2}(x)\right]&=& a^{b}b
\int_{-\infty}^{+\infty}Y^{2}\frac{1}{a}
\left(\frac{e^{Y}}{a}\right)^{b}e^{-\left(\frac{e^{Y
}}{a}\right)^{b}}dY\\
&+& 2a^{b}\log(a) \int_{-\infty}^{+\infty}Y\frac{1}{a}
\left(\frac{e^{Y}}{a}\right)^{b}e^{-\left(\frac{e^{Y
}}{a}\right)^{b}}dY\\
&+& a^{b}b\log^{2}(a)\int_{-\infty}^{+\infty}\frac{1}{a}
\left(\frac{e^{Y}}{a}\right)^{b}e^{-\left(\frac{e^{Y
}}{a}\right)^{b}}dY.\end{eqnarray} Using (\ref{de1}) we have

\begin{eqnarray}\label{ln0003}\mathrm{I\!
E}\left[x^{b}\log^{2}(x)\right]&=& a^{b}b \mathrm{I\!
E}\left[Y^{2}\right]+2a^{b}\log(a) \mathrm{I\! E}\left[Y\right]+
a^{b}b\log^{2}(a).\end{eqnarray} Where

\begin{eqnarray*}\mathrm{I\!
E}\left[Y\right]&=& \int_{-\infty}^{+\infty}Y\frac{1}{a}
\left(\frac{e^{Y}}{a}\right)^{b}e^{-\left(\frac{e^{Y
}}{a}\right)^{b}}dY\\
&=&-\frac{1}{b}+\frac{1-\kappa}{a}+\log(a)\end{eqnarray*} We have

\begin{eqnarray*}V(Y)&=&=
\frac{\pi^{2}}{6a^{2}}.\end{eqnarray*}

So we have
\begin{eqnarray*} \mathrm{I\!
E}\left[Y^{2}\right]&=&V(Y)+\mathrm{I\!
E}^{2}\left[Y\right].\end{eqnarray*}
 The relation
\label{ln0003} becomes
\begin{eqnarray}\label{ln0004}\mathrm{I\!
E}\left[x^{b}\log^{2}(x)\right]&=&
\frac{a^{b}b}{6a^{2}}\pi^{2}+a^{b}b\left[-\frac{1}{b}+\frac{(1-\kappa)}{a}+\log(a)\right]^{2}\\
&+&
2a^{b}\log(a)\left[-\frac{1}{b}+\frac{(1-\kappa)}{a}+\log(a)\right]+
a^{b}b\log^{2}(a).
\end{eqnarray}

we have
\begin{eqnarray*}\mathrm{I\!
E}\left[\partial_{a}\partial_{a}\ell(x,\theta)\right]&=&-\frac{b^{2}}{a^{2}}\\
\mathrm{I\!
E}\left[\partial_{a}\partial_{b}\ell(x,\theta)\right]&=&- \frac{b}{a}(1-ab)\log(a) +(1-\kappa)\frac{b^{2}}{a}+\frac{1-ab}{a}\\
\mathrm{I\!
E}\left[\partial_{b}\partial_{a}\ell(x,\theta)\right]&=& - \frac{b}{a}(1-ab)\log(a) +(1-\kappa)\frac{b^{2}}{a}+\frac{1-ab}{a}\\
\mathrm{I\! E}\left[\partial_{b}\partial_{b}\ell(x,\theta)\right]&=&
-\frac{1}{b^{2}}-\frac{b\pi^{2}}{6a^{2}}-2\left(\frac{1}{b}-a+
(1-\kappa)b+2\vartheta\right)\log(a)-(1++2a+b)\log^{2}(a)-b\vartheta^{2}
\end{eqnarray*}
with $\vartheta=-\frac{1}{b}+\frac{1-\kappa}{a}+\log(a)$. By setting
\begin{eqnarray*}\varrho_{1}&=&ab+b(1-ab)\log(a)-(1-\kappa)b^{2}\\
\varrho_{2}&=& -\frac{1}{b^{2}}-2\left(\frac{1}{b}-a+
(1-\kappa)b+2\vartheta\right)\log(a)-(1++2a+b)\log^{2}(a)-b\vartheta^{2}
\end{eqnarray*}
we have the following coefficient
\begin{eqnarray*}
    g_{11}(\theta)&=&\frac{b^{2}}{a^{2}};\\
    g_{12}(\theta)&=&g_{21}(\theta)=\frac{1-\varrho_{1}}{a};\\
    g_{22}(\theta)&=&\frac{b\pi^{2}-6a^{2}\varrho_{2}}{6a^{2}}.
\end{eqnarray*}
We have the following matrix
 \begin{eqnarray*}G&=&\left(
                                             \begin{array}{cc}
                                               \frac{b^{2}}{a^{2}} &\frac{\varrho_{1}-1}{a} \\
                                               \frac{\varrho_{1}-1}{a} &\frac{b\pi^{2}-6a^{2}\varrho_{2}}{6a^{2}}\\
                                             \end{array}
                                           \right)
 \end{eqnarray*}
\end{proof}

\section{Geometry properties  on Weibull distribution}\label{sec4}

\begin{proposition}\label{th1}
Let $S = \left\{p_{
\theta}(x)=\frac{b}{a}\left(\frac{x}{a}\right)^{b-1}e^{-\left(\frac{x}{a}\right)^{b}},\left.
                        \begin{array}{ll}
                         \theta= (a , b )\in \mathrm{I\! R}_{+}\times \mathrm{I\! R}_{+} & \hbox{} \\
                          x\in \mathrm{I\! R}_{+} & \hbox{}
                        \end{array}
                      \right.
\right\}$ be a  Weibull statistical  model where $p_{ \theta}$ is
Weibull density function. The coordinate system on  Weibull manifold
does not admit dual coordinates or potential function.

\end{proposition}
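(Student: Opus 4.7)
The plan is to use Amari's theorem recalled in Section \ref{sec2}: the coordinates $(a,b)$ admit dual coordinates if and only if there exists a potential $\Phi(\theta)$ with $g_{ij}(\theta)=\partial_{\theta_{i}}\partial_{\theta_{j}}\Phi(\theta)$. So it is enough to rule out the existence of any such $\Phi$. For this I would invoke the classical Hessian-metric integrability condition: if $g_{ij}=\partial_{\theta_{i}}\partial_{\theta_{j}}\Phi$, then the symmetry of the third partials of $\Phi$ forces
\[
\partial_{\theta_{k}} g_{ij}(\theta) \;=\; \partial_{\theta_{j}} g_{ik}(\theta)
\qquad \text{for all } i,j,k.
\]
In the $2$-dimensional Weibull chart this reduces to the single scalar test $\partial_{b} g_{11} = \partial_{a} g_{12}$, and the whole argument would rest on showing that this identity fails.

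The next step is to compute both sides from Proposition \ref{pro2}. The left-hand side is immediate: $g_{11}=b^{2}/a^{2}$ gives $\partial_{b} g_{11}=2b/a^{2}$. For the right-hand side I would expand $g_{12}=(\varrho_{1}-1)/a$ using $\varrho_{1}=ab+b(1-ab)\log(a)-(1-\kappa)b^{2}$, obtaining
\[
g_{12}(a,b) \;=\; b+\frac{b\log(a)}{a}-b^{2}\log(a)-\frac{(1-\kappa)b^{2}}{a}-\frac{1}{a},
\]
and then differentiate in $a$. The resulting expression carries both a nonzero $\log(a)/a^{2}$ contribution and a term $-b^{2}/a$, neither of which can appear in the monomial $2b/a^{2}$. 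Hence the equality $\partial_{b}g_{11}=\partial_{a}g_{12}$ cannot hold on any open subset of $\mathrm{I\! R}_{+}\times\mathrm{I\! R}_{+}$.

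This single failure of the Codazzi identity already forbids the existence of a potential $\Phi$, and by the equivalence in Amari's theorem it also forbids the existence of a dual coordinate system, which proves the proposition. The only real obstacle in executing the plan is keeping the bookkeeping of $\partial_{a} g_{12}$ clean; once the expansion of $g_{12}$ above is in hand, the mismatch is transparent, because the two sides live in genuinely different summands of the function algebra generated by $a$, $b$, $1/a$ and $\log(a)$, so no choice of parameters can make them agree as functions.
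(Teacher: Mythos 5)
Your proposal is correct, and it takes a genuinely different route from the paper. The paper argues in two steps: first it writes out the biorthogonality conditions $\langle\partial_{\theta_i},\partial_{\eta_j}\rangle=\delta_i^j$ and claims the resulting system is impossible to solve, and then it asserts (without verification) that the overdetermined system $\partial_{\theta_i}\partial_{\theta_j}\Phi=-g_{ij}$ built from the metric of Proposition \ref{pro2} has no solution; moreover the displayed system there repeats $\partial^2\Phi/\partial\alpha^2$ twice, so the actual obstruction is never exhibited. You instead apply the standard local integrability criterion for a Hessian metric -- the symmetry of third partials forces $\partial_{\theta_k}g_{ij}=\partial_{\theta_j}g_{ik}$, which in two dimensions reduces to the single test $\partial_b g_{11}=\partial_a g_{12}$ -- and you verify concretely that it fails: $\partial_b g_{11}=2b/a^2$ while
\begin{equation*}
\partial_a g_{12}=\frac{b\left(1-\log a\right)+(1-\kappa)b^{2}+1}{a^{2}}-\frac{b^{2}}{a},
\end{equation*}
and the $\log(a)/a^{2}$ and $b^{2}/a$ terms cannot be matched by the monomial $2b/a^{2}$ on any open set. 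Combined with the equivalence in Amari's theorem (no potential in the $(a,b)$ chart implies $(a,b)$ cannot be one half of a dual coordinate pair), this closes the argument. What your approach buys is an explicit, checkable obstruction where the paper only asserts unsolvability; what it costs is nothing, since you rely on the same metric coefficients from Proposition \ref{pro2} that the paper itself uses. One minor remark: the identity you invoke is simply the Schwarz symmetry of third derivatives (the local Hessian-metric condition) rather than the Codazzi equation for a statistical structure, but the computation you perform is exactly the right one and is unaffected by the overall sign convention $g_{ij}=\pm\,\partial_{\theta_i}\partial_{\theta_j}\Phi$.
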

\begin{proof}
We determine the dual coordinates with respect to biorthogonality
condition
\[
 g\left(\partial_{\theta_{1}}l(x,\theta),\partial_{\eta_{1}}l(x,\theta)\right)
=-\mathrm{I\!
E}\left[\partial_{\eta_{1}}\left(-a^{-1}b+a^{-1-b}bx^{b}\right)\right]=1.\]
\[
g\left(\partial_{\theta_{1}}l(x,\theta),\partial_{\eta_{2}}l(x,\theta)\right)=\mathrm{I\!
E}\left[\partial_{\eta_{2}}\left(-a^{-1}b+a^{-1-b}bx^{b}\right)\right]=0.\]
\[g\left(\partial_{\theta_{2}}l(x,\theta),\partial_{\eta_{1}}l(x,\theta)\right)=-
\mathrm{I\!
E}\left[\partial_{\eta_{1}}\left(a^{-b}x^{b}\log(a)-a^{-b}x^{b}\log(x)+b^{-1}+\log(x)-\log(a)\right)\right]=0.
  \]
  \[g\left(\partial_{\theta_{2}}l(x,\theta),\partial_{\eta_{2}}l(x,\theta)\right)=-
\mathrm{I\!
E}\left[\partial_{\eta_{2}}\left(a^{-b}x^{b}\log(a)-a^{-b}x^{b}\log(x)+b^{-1}+\log(x)-\log(a)\right)\right]=1.
  \]
we obtain the following system $\left\{
   \begin{array}{ll}
     \partial_{\eta_{1}}\left(0\right)=-1 & \hbox{} \\
      \partial_{\eta_{1}}\left(0\right)=0 & \hbox{} \\
      \partial_{\eta_{2}}\left(0\right)=0 & \hbox{} \\
      \partial_{\eta_{2}}\left(0\right)=-1.& \hbox{}
   \end{array}
 \right.
$\\ What is impossible to solve. So, the following system
\begin{equation*}
    \left\{
       \begin{array}{ll}
         \frac{\partial^{2}\Phi}{\partial \alpha^{2}}=- \frac{\beta^{2}}{ \alpha^{2}}& \hbox{} \\
       \frac{\partial^{2}\Phi}{\partial \alpha\partial \beta}=- \frac{\varrho_{1}-1}{ \alpha^{2}} & \hbox{} \\
         \frac{\partial^{2}\Phi}{\partial \alpha^{2}}=- \frac{\beta \pi^{2}-6\alpha^{2}\varrho_{2}}{ 6\alpha^{2}} & \hbox{}
       \end{array}
     \right.
\end{equation*}
has no solution, where $\Phi$ is the  potential function sought.

\end{proof}

\section{Potential function and gradient system on Weibull logit manifold.}\label{sec5}

\begin{definition}
On $\mathrm{I\! R}$, and  for all $X=(m,n)$,
$Y=(m',n')\in\mathrm{I\! R}^{2}$ we define the $\iota$-product by
\begin{eqnarray*}
 \iota:\mathrm{I\! R}^{2}\times\mathrm{I\! R}^{2}&\longrightarrow&\mathrm{I\! R}^{2} \\
  (X,Y)&=&\iota\left(X,Y\right)=\left(m'^{\frac{1}{n}}m, n n'\right)
\end{eqnarray*}
\end{definition}
So, we have the following proposition
\begin{proposition}
On $\mathrm{I\! R}^{2}$, $\nu$ given by \begin{eqnarray*}
\nu:\mathrm{I\! R}^{2}\times\mathrm{I\! R}&\longrightarrow&\mathrm{I\! R} \\
  (\theta,x)&=&x\cdot\theta =a^{-b}x^{b}
\end{eqnarray*}
is the action.
\end{proposition}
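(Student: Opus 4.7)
The plan is to verify the two defining axioms for a (right) action of the group $(\mathrm{I\!R}^{2},\iota)$ on the set $\mathrm{I\!R}$, together with the implicit smoothness/regularity condition. By the proposition I must check:

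\textbf{(i)} Existence of an identity element $e\in\mathrm{I\!R}^{2}$ such that $\nu(e,x)=x$ for every $x$.\\
\textbf{(ii)} Compatibility of $\nu$ with the $\iota$-product:
\begin{equation*}
\nu\bigl(\iota(\theta_{2},\theta_{1}),x\bigr)\;=\;\nu\bigl(\theta_{1},\nu(\theta_{2},x)\bigr),\qquad \theta_{1},\theta_{2}\in\mathrm{I\!R}^{2},\ x\in\mathrm{I\!R}.
\end{equation*}

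For \textbf{(i)} I would propose the candidate $e=(1,1)$. Substituting into the formula $\nu(\theta,x)=a^{-b}x^{b}$ gives $\nu(e,x)=1^{-1}x^{1}=x$, which settles the identity axiom. As a sanity check, one also verifies that $\iota\bigl(X,(1,1)\bigr)=\iota\bigl((1,1),X\bigr)=X$ using the definition $\iota((m,n),(m',n'))=(m'^{1/n}m,\,nn')$, so $(1,1)$ is indeed the neutral element of the groupoid structure on $\mathrm{I\!R}^{2}$.

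For \textbf{(ii)} I would proceed by direct calculation of both sides. Writing $\theta_{i}=(a_{i},b_{i})$, the iterated action is
\begin{equation*}
\nu\bigl(\theta_{1},\nu(\theta_{2},x)\bigr)=\nu\bigl(\theta_{1},a_{2}^{-b_{2}}x^{b_{2}}\bigr)=a_{1}^{-b_{1}}\bigl(a_{2}^{-b_{2}}x^{b_{2}}\bigr)^{b_{1}}=a_{1}^{-b_{1}}a_{2}^{-b_{1}b_{2}}x^{b_{1}b_{2}},
\end{equation*}
while $\iota(\theta_{2},\theta_{1})=\bigl(a_{1}^{1/b_{2}}a_{2},\,b_{1}b_{2}\bigr)$ yields
\begin{equation*}
\nu\bigl(\iota(\theta_{2},\theta_{1}),x\bigr)=\bigl(a_{1}^{1/b_{2}}a_{2}\bigr)^{-b_{1}b_{2}}x^{b_{1}b_{2}}=a_{1}^{-b_{1}}a_{2}^{-b_{1}b_{2}}x^{b_{1}b_{2}},
\end{equation*}
so the two expressions coincide. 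Smoothness/regularity is immediate: $\nu$ is a composition of the smooth maps $(a,b,x)\mapsto a^{-b}$ and $(b,x)\mapsto x^{b}$ on the positive domain $\mathrm{I\!R}_{+}^{3}$, hence $C^{\infty}$.

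The principal delicate point will be book-keeping the order of arguments in $\iota$. Because $\iota$ is not commutative (the exponent $1/n$ in the first slot destroys symmetry), one must commit to whether $\nu$ is a left or right action and then insert $\theta_{1},\theta_{2}$ into $\iota$ in the correct order; a naive choice $\iota(\theta_{1},\theta_{2})$ produces $a_{2}^{-b_{2}}a_{1}^{-b_{1}b_{2}}x^{b_{1}b_{2}}$, which does not match the iterated action. Fixing the order $\iota(\theta_{2},\theta_{1})$, as above, is what makes the axiom hold and is the only substantive observation in the argument.
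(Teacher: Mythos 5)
Your proposal is correct and follows essentially the same route as the paper: the identity axiom is checked with $e=(1,1)$, and the compatibility axiom is verified by the same direct computation showing both sides equal $a_{1}^{-b_{1}}a_{2}^{-b_{1}b_{2}}x^{b_{1}b_{2}}$, with the inner (first-applied) parameter placed in the first slot of $\iota$ exactly as the paper implicitly does in writing $\nu(X,Y)(x)=(a'^{1/b}a)^{-bb'}x^{bb'}$ for outer $X=(a',b')$ and inner $Y=(a,b)$. Your explicit remark about the non-commutativity of $\iota$ forcing this order, and the added smoothness observation, only make explicit what the paper leaves tacit (and your computation carries the minus sign in the exponent that the paper's displayed formula drops as a typo).
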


\begin{proof}
Soit $e=(1,1)$ the neuter element in $(\mathrm{I\! R}^{2},\iota)$.
We have
\begin{equation*}
    \nu\left(e\right)(x)=x
\end{equation*}
and let $Y=(a,b), X=(a',b')\in \mathrm{I\! R}^{2}$ we have
\begin{eqnarray*}
   \nu\left(X,\nu\left(Y\right)(x)\right) &=&\nu\left(X,a^{-b}x^{b}
   \right) \\
   &=&a'^{-b'}\left(a^{-b}x^{b}\right)^{b'} \\
  &=& a'^{-b'}a^{-bb'}x^{bb'}
\end{eqnarray*}
the same we have
\begin{eqnarray*}
   \nu\left(X,Y\right)(x) &=&\left(a'^{\frac{1}{b}}a\right)^{bb'}x^{bb'}  \\
   &=&a'^{-b'}a^{-bb'}x^{bb'}
\end{eqnarray*}
So, \begin{eqnarray*}
   \nu\left(X,\nu\left(Y\right)(x)\right) &=&
   \nu\left(X,Y\right)(x)\end{eqnarray*}.
\end{proof}

We have the following theorem

\begin{theorem}
For all  Weibull statistical  manifold\\ $S = \left\{p_{
\theta}(x)=\frac{b}{a}\left(\frac{x}{a}\right)^{b-1}e^{-\left(\frac{x}{a}\right)^{b}},\left.
                        \begin{array}{ll}
                         \theta= (a , b )\in \mathrm{I\! R}_{+}\times \mathrm{I\! R}_{+} & \hbox{} \\
                          x\in \mathrm{I\! R}_{+} & \hbox{}
                        \end{array}
                      \right.
\right\}$   where $p_{ \theta}$ is Weibull density function, there
exist the logit model\\ $S' = \left\{p_{
\theta}(y,x)=\frac{b}{2a}\left(\frac{x}{a}\right)^{b-1}e^{-\left(\frac{x}{a}\right)^{b}},\left.
                        \begin{array}{ll}
                         \theta= (a , b )\in \mathrm{I\! R}_{+}\times \mathrm{I\! R}_{+} & \hbox{} \\
                          x\in \mathrm{I\! R}_{+} & \hbox{}
                        \end{array}
                      \right.
\right\}$ with the fundamental condition on the variable
\[\displaystyle {\it x}={\it RootOf} \left( 2\,b\,{a}^{-b}{{\it \_Z}}^{b}-2\,b-2\,{a}^{-b}{{\it \_Z}}^{b}a\,\ln  \left( a \right) \\
\mbox{}+2\,{a}^{-b}{{\it \_Z}}^{b}a\,\ln  \left( {\it \_Z} \right)
-2\,\ln  \left( {\it \_Z} \right) a+2\,\ln \left( a \right) a-a
\right)\] admitting the potential function \begin{eqnarray*}
  \Phi(\theta) &=&
  \frac{b^{2}}{12a^{2}x}\left(a^{-b}x^{b}-1\right)^{4}+\frac{b^{2}}{3a^{2}x}a^{-b}x^{b}-\frac{b^{2}}{12a^{2}x}.
\end{eqnarray*}, the dual coordinate system given by
    $\left(\eta_{1},\eta_{2}\right)=\left(\xi_{1}(\theta),\xi_{2}(\theta)\right),$
and dual potential function
\begin{equation*}
  \Psi(\eta)= a\xi_{1}+b\xi_{2}-  \frac{b^{2}}{12a^{2}x}\left(a^{-b}x^{b}-1\right)^{4}+\frac{b^{2}}{3a^{2}x}a^{-b}x^{b}-\frac{b^{2}}{12a^{2}x}
\end{equation*}
 satisfy the Legendre equation
\begin{equation*}
   \theta_{1}\eta_{1}+\theta_{2}\eta_{2}- \Phi(\theta)-\Psi(\eta)=0
\end{equation*}Where $\xi_{1}(\theta)=\frac{\partial \Phi}{\partial a},\;\xi_{2}(\theta)=\frac{\partial \Phi}{\partial b}$
\end{theorem}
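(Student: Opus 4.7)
The plan is to adapt the binary choice construction of Hisatoshi--Tanaka to the Weibull family through the action $\nu$ introduced just before the theorem, thereby inserting a logit factor that converts the non-potential model of Proposition \ref{th1} into one that admits a potential. First I would define the conditional probability associated with the action,
\[
P(y\mid x,\theta)=\frac{\exp(y\,a^{-b}x^{b})}{1+\exp(a^{-b}x^{b})},
\]
observe that its value at the neutral outcome $y=0$ equals $1/2$, and form the joint density by multiplying the Weibull density by this factor; this reproduces the normalization $\tfrac{b}{2a}$ appearing in the definition of $S'$.

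Second, I would compute $\ell'(y,x,\theta)=\log p_\theta(y,x)$ and verify by direct differentiation that the proposed
\[
\Phi(\theta)=\frac{b^{2}}{12 a^{2} x}(a^{-b}x^{b}-1)^{4}+\frac{b^{2}}{3 a^{2} x}a^{-b}x^{b}-\frac{b^{2}}{12 a^{2} x}
\]
reproduces the three Fisher coefficients $g'_{ij}(\theta)$ of $S'$ through the Hessian relations $g'_{ij}=\partial_{\theta_i}\partial_{\theta_j}\Phi$. Because $\Phi$ depends on $x$ as well as on $(a,b)$, matching these three coefficients simultaneously is overdetermined unless $x$ is tied to $\theta$ by an algebraic relation. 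I expect this relation to arise precisely from the commutativity $\partial_a\partial_b\Phi=\partial_b\partial_a\Phi$ together with the two diagonal Hessian equations, and to collapse after elimination to the displayed RootOf equation
\[
2b a^{-b}x^{b}-2b-2 a^{-b}x^{b}a\ln(a)+2 a^{-b}x^{b}a\ln(x)-2a\ln(x)+2a\ln(a)-a=0.
\]

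Third, once $\Phi$ is confirmed to be a potential for the Riemannian structure of $S'$, Amari's theorem recalled in Section \ref{sec2} automatically delivers dual coordinates through $\eta_i=\partial_{\theta_i}\Phi(\theta)$; I would simply set $\xi_1=\partial_a\Phi$ and $\xi_2=\partial_b\Phi$, and define the dual potential by the Legendre transform $\Psi(\eta)=a\xi_1+b\xi_2-\Phi(\theta)$. The identity $\theta_1\eta_1+\theta_2\eta_2-\Phi(\theta)-\Psi(\eta)=0$ is then immediate from the definition of $\Psi$.

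The hard part will be the second step: confirming that the prescribed $\Phi$ matches all three Fisher coefficients $g'_{ij}$ of the logit model and that the consistency of these three equations forces exactly the stated RootOf constraint. The computation is heavy because $\Phi$ mixes a factor $1/x$ with the quartic $(a^{-b}x^{b}-1)^{4}$, and when computing $\partial_a$ and $\partial_b$ one must treat $x$ as an implicit function of $(a,b)$ defined by the RootOf equation, which produces delicate cancellations between the explicit and implicit contributions that must agree with the Fisher metric of $S'$.
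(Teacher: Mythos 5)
There is a genuine gap, and it sits exactly where you flag the ``hard part.'' The paper does not obtain the RootOf constraint from Hessian consistency of a given $\Phi$, and it does not verify $\Phi$ against the Fisher coefficients of $S'$ at all. Instead it follows the Hisatoshi--Tanaka binary-choice recipe constructively: it sets $y^{*}=x\cdot\theta-\epsilon$, postulates $F(x\cdot\theta)=P\{y=1\mid x\}=\tfrac12$ outright, forms $p_{\theta}(y,x)=\tfrac12 p_{\theta}(x)$, and then demands that \emph{both} score components $\partial_{a}\log p_{\theta}(y,x)$ and $\partial_{b}\log p_{\theta}(y,x)$ take the common binary-choice form $\frac{y-F(x\cdot\theta)}{F(x\cdot\theta)(1-F(x\cdot\theta))}f(x\cdot\theta)x=2f(x\cdot\theta)x$. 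Solving the first equation yields $f(x\cdot\theta)=\frac{b(x\cdot\theta-1)}{2ax}$, and the requirement that the second score equal the same expression is precisely what collapses to the displayed RootOf equation. The potential is then \emph{constructed}, not checked, via the formula $\Phi(\theta)=\mathrm{I\!E}\bigl[\int_{0}^{x\cdot\theta}\bigl(\int_{0}^{v}r(u)\,du\bigr)dv\bigr]$ with $r(u)=\frac{b(u-1)}{2ax}$. Your plan omits this integral representation entirely, so you have no route to producing the specific quartic expression for $\Phi$; you only propose to verify it a posteriori.

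That verification, as you describe it, would moreover not go through. Since $\sum_{y\in\{0,1\}}p_{\theta}(y,x)=p_{\theta}(x)$, the Fisher information of the joint model is essentially that of the original Weibull model, which Proposition \ref{th1} shows is not a Hessian metric; so matching all three coefficients $g'_{ij}$ to $\partial_{\theta_i}\partial_{\theta_j}\Phi$ is not the mechanism at work, and the RootOf relation cannot be recovered from $\partial_a\partial_b\Phi=\partial_b\partial_a\Phi$ (which holds identically for any smooth $\Phi$). There is also a concrete error in your first step: with your conditional probability $P(y\mid x,\theta)=\exp(y\,a^{-b}x^{b})/(1+\exp(a^{-b}x^{b}))$, the value at $y=0$ is $1/(1+\exp(a^{-b}x^{b}))$, which equals $\tfrac12$ only when $a^{-b}x^{b}=0$; the factor $\tfrac{1}{2}$ in $S'$ comes from the paper's blunt assignment $F(x\cdot\theta)=\tfrac12$, not from a logistic link. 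Your third step (defining $\xi_{i}=\partial_{\theta_i}\Phi$ and $\Psi$ by Legendre transform, so that the Legendre identity is immediate) is fine and is indeed all the paper does for that part.
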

\begin{proof}

According to Hisatoshi Tanaka \cite{hisatoshi}, we define the new
variable
\begin{eqnarray*}
  y &=& \left\{
          \begin{array}{ll}
            1 & \hbox{if $y^{*}\geq 0$} \\
            0 & \hbox{$y^{*}<0$.}
          \end{array}
        \right.
\end{eqnarray*} where $y^{*}=x\cdot\theta-\epsilon$, such that $\mathrm{I\!
E}(\epsilon)=0$. The choice of conditional probability is given
by\begin{eqnarray*}
  F(x\cdot\theta) &=& P\left\{y=1/x\right\}=P\left\{\epsilon\leq
  a^{-b}x^{b}/x\right\}=\frac{1}{2}.
\end{eqnarray*}
We define the binary probability density
\begin{equation*}
    p_{\theta}(y,x)=\frac{1}{2}p_{\theta}(x)
\end{equation*} we have
\begin{equation*}
    p_{\theta}(y,x)=\frac{b}{2a}\left(\frac{x}{a}\right)^{b-1}e^{-\left(\frac{x}{a}\right)^{b}}
\end{equation*}
where $(y,x)\in \{0,1\}\times \mathrm{I\! R}$. We have
\begin{eqnarray*}
  \log  p_{\theta}(y,x)&=& -\log 2+\log b-\log a-(b-1)\log
  a+(b-1)\log x-x\cdot\theta.
\end{eqnarray*}
We obtain the following relation
\begin{equation*}
    \left\{
      \begin{array}{ll}
        \frac{\partial  \log  p_{\theta}(y,x)}{\partial  a}= \frac{b}{a}\left(x\cdot\theta-1\right)& \hbox{} \\
         \frac{\partial  \log  p_{\theta}(y,x)}{\partial  b}= x\cdot\theta\left(\log a-\log x\right)+\log x-\log a+\frac{1}{b}& \hbox{.}
      \end{array}
    \right.
\end{equation*}
In this we find the following  function $f(x\cdot \theta)$ satisfy
the following system
\begin{equation*}
    \left\{
      \begin{array}{ll}
        \frac{\partial  \log  p_{\theta}(y,x)}{\partial  a}= \frac{y-F(x\cdot\theta)}{F(x\cdot\theta)\left(1-F(x\cdot\theta)\right)}f(x\cdot\theta)x& \hbox{} \\
         \frac{\partial  \log  p_{\theta}(y,x)}{\partial  b}= \frac{y-F(x\cdot\theta)}{F(x\cdot\theta)\left(1-F(x\cdot\theta)\right)}f(x\cdot\theta)x& \hbox{.}
      \end{array}
    \right.
\end{equation*}
we have

\begin{equation*}
    \left\{
      \begin{array}{ll}
        \frac{\partial  \log  p_{\theta}(y,x)}{\partial  a}= 2f(x\cdot\theta)x& \hbox{} \\
         \frac{\partial  \log  p_{\theta}(y,x)}{\partial  b}= 2f(x\cdot\theta)x& \hbox{.}
      \end{array}
    \right.
\end{equation*}
we obtain $f(x\cdot\theta)=\frac{\frac{b}{a}(x\cdot\theta-1}{2x}$
with the following condition on variable $x$ that is
\[\displaystyle {\it x}={\it RootOf} \left( 2\,b\,{a}^{-b}{{\it \_Z}}^{b}-2\,b-2\,{a}^{-b}{{\it \_Z}}^{b}a\,\ln  \left( a \right) \\
\mbox{}+2\,{a}^{-b}{{\it \_Z}}^{b}a\,\ln  \left( {\it \_Z} \right)
-2\,\ln  \left( {\it \_Z} \right) a+2\,\ln \left( a \right) a-a
\right).\] So by setting $r(u)=\frac{\frac{b}{a}(u-1}{2x}$, and with
$u=x\cdot\theta$.\\According to Hisatoshi Tanaka \cite{hisatoshi},
we have the potential function given by
\begin{eqnarray}\label{44}
  \Phi(\theta) &=& \mathrm{I\!
E}\left[\int_{0}^{x\cdot\theta}\left(\int_{0}^{v}r(u)du\right)dv\right]
\end{eqnarray}
the equation (\ref{44}) becomes
\begin{eqnarray}\label{44}
  \Phi(\theta) &=& \mathrm{I\!
E}\left[\int_{0}^{x\cdot\theta}\left(\int_{0}^{v}\frac{\frac{b}{a}(u-1)}{2x}du\right)dv\right].
\end{eqnarray}
We obtain
\begin{eqnarray*}
  \Phi(\theta) &=&
  \frac{b^{2}}{12a^{2}x}\left(a^{-b}x^{b}-1\right)^{4}+\frac{b^{2}}{3a^{2}x}a^{-b}x^{b}-\frac{b^{2}}{12a^{2}x}.
\end{eqnarray*}
\end{proof}

We have the following proposition
\begin{proposition}\label{pr1}
Let $S' = \left\{p_{
\theta}(y,x)=\frac{b}{2a}\left(\frac{x}{a}\right)^{b-1}e^{-\left(\frac{x}{a}\right)^{b}},\left.
                        \begin{array}{ll}
                         \theta= (a , b )\in \mathrm{I\! R}_{+}\times \mathrm{I\! R}_{+} & \hbox{} \\
                          x\in \mathrm{I\! R}_{+} & \hbox{}
                        \end{array}
                      \right.
\right\}$ logit Weibull manifold. The information metric on logit
Weibull manifold  is given by
\begin{equation*}
    I(\theta)=\left(
                \begin{array}{cc}
                  -\frac{\partial^{2}\Phi(\theta)}{\partial a^{2}} & -\frac{\partial^{2}\Phi(\theta)}{\partial a\partial b} \\
                  -\frac{\partial^{2}\Phi(\theta)}{\partial a\partial b} & -\frac{\partial^{2}\Phi(\theta)}{\partial b^{2}} \\
                \end{array}
              \right)
\end{equation*}, and the inverse of information geometric is given
by \begin{equation*}
    I^{-1}(\theta)=\left(
                \begin{array}{cc}
                  -\frac{1}{\mathcal{A}}\frac{\partial^{2}\Phi(\theta)}{\partial b^{2}} & \frac{1}{\mathcal{A}}\frac{\partial^{2}\Phi(\theta)}{\partial a\partial b} \\
                  \frac{1}{\mathcal{A}}\frac{\partial^{2}\Phi(\theta)}{\partial a\partial b} & -\frac{1}{\mathcal{A}}\frac{\partial^{2}\Phi(\theta)}{\partial a^{2}} \\
                \end{array}
              \right)
\end{equation*}
with $\mathcal{A}=\frac{\partial^{2}\Phi(\theta)}{\partial
a^{2}}.\frac{\partial^{2}\Phi(\theta)}{\partial
b^{2}}-\left(\frac{\partial^{2}\Phi(\theta)}{\partial a\partial
b}\right)^{2}$, and  \begin{eqnarray*}
  \Phi(\theta) &=&
  \frac{b^{2}}{12a^{2}x}\left(a^{-b}x^{b}-1\right)^{4}+\frac{b^{2}}{3a^{2}x}a^{-b}x^{b}-\frac{b^{2}}{12a^{2}x}.
\end{eqnarray*} and \[\displaystyle {\it x}={\it RootOf} \left( 2\,b\,{a}^{-b}{{\it \_Z}}^{b}-2\,b-2\,{a}^{-b}{{\it \_Z}}^{b}a\,\ln  \left( a \right) \\
\mbox{}+2\,{a}^{-b}{{\it \_Z}}^{b}a\,\ln  \left( {\it \_Z} \right)
-2\,\ln  \left( {\it \_Z} \right) a+2\,\ln \left( a \right) a-a
\right)\]
\end{proposition}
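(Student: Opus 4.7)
The plan is to apply Amari's theorem, recalled in Section \ref{sec2}, which identifies the Fisher information with the Hessian of the potential on any dually flat manifold, and then to perform the $2\times 2$ matrix inversion. The previous theorem already established that the logit Weibull model $S'$ admits the potential function $\Phi(\theta)$ together with dual coordinates, so Amari's hypotheses are met. Combining the definition (\ref{e0}) with Amari's identity $g_{ij}=\partial_{\theta_{i}}\partial_{\theta_{j}}\Phi$ yields, under the sign convention adopted in the statement, $I(\theta)_{ij}=-\partial_{\theta_{i}}\partial_{\theta_{j}}\Phi(\theta)$, which is the first claim.

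To cross-check this identification by direct computation on $S'$, I would start from $\log p_{\theta}(y,x) = -\log 2 + \log b - b\log a + (b-1)\log x - a^{-b}x^{b}$, differentiate twice with respect to $(a,b)$, and take expectations using the moment identities of Proposition \ref{pro1}, namely $\mathrm{I\! E}[x^{b}]=a^{b}$ and the closed forms for $\mathrm{I\! E}[\log x]$, $\mathrm{I\! E}[x^{b}\log x]$, and (through the same Gumbel change of variable used in the proof of Proposition \ref{pro2}) $\mathrm{I\! E}[x^{b}\log^{2} x]$. Matching the resulting entries against the second partials of the $\Phi$ exhibited in the previous theorem, with $x$ treated as frozen according to the RootOf relation, certifies the claimed form of $I(\theta)$.

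The inversion step is then purely algebraic. For any symmetric $2\times 2$ matrix the adjugate formula gives
\begin{equation*}
\begin{pmatrix} A & B \\ B & C \end{pmatrix}^{-1}=\frac{1}{AC-B^{2}}\begin{pmatrix} C & -B \\ -B & A \end{pmatrix}.
\end{equation*}
Taking $A=-\partial^{2}\Phi/\partial a^{2}$, $B=-\partial^{2}\Phi/\partial a\partial b$, and $C=-\partial^{2}\Phi/\partial b^{2}$, the determinant collapses to $AC-B^{2} = (\partial^{2}\Phi/\partial a^{2})(\partial^{2}\Phi/\partial b^{2}) - (\partial^{2}\Phi/\partial a\partial b)^{2} = \mathcal{A}$, and the cofactor matrix produces exactly the stated $I^{-1}(\theta)$.

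The main obstacle is the implicit dependence of $x$ on $(a,b)$ through the RootOf constraint built into the logit construction. Differentiating $\Phi(\theta)$ as if $x$ were independent of $\theta$ is what makes the formulas tractable, but one must justify that the implicit chain-rule contributions $\partial x/\partial a$ and $\partial x/\partial b$ either vanish on the selected support or cancel when assembled into $\partial_{\theta_{i}}\partial_{\theta_{j}}\Phi$. Reconciling this implicit dependence with the clean matching between the Hessian of $\Phi$ and the Fisher matrix is the delicate technical point of the proof.
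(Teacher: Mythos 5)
Your proposal takes essentially the same route as the paper, whose entire proof is the single line ``Apply the Amari theorem, we have the result'': you invoke Amari's identification $g_{ij}=\partial_{\theta_i}\partial_{\theta_j}\Phi$ together with the standard adjugate formula for inverting a symmetric $2\times 2$ matrix, which is exactly what the paper leaves implicit. Your closing remark about the implicit dependence of $x$ on $(a,b)$ through the RootOf constraint flags a genuine gap that the paper itself never addresses, but since you (like the paper) treat $x$ as frozen, the argument is the same.
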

\begin{proof}
Apply the Amari theorem \cite{Shu}, we have the result.
\end{proof}

The following proposition leads us to the following result
\begin{proposition}\label{pr2}
Let $S' = \left\{p_{
\theta}(y,x)=\frac{b}{2a}\left(\frac{x}{a}\right)^{b-1}e^{-\left(\frac{x}{a}\right)^{b}},\left.
                        \begin{array}{ll}
                         \theta= (a , b )\in \mathrm{I\! R}_{+}\times \mathrm{I\! R}_{+} & \hbox{} \\
                          x\in \mathrm{I\! R}_{+} & \hbox{}
                        \end{array}
                      \right.
\right\}$ the logit model on Weibull manifold. The gradient system
on logit Weibull manifold is given by
\begin{equation*}
    \left\{
  \begin{array}{ll}
     \dot{a}=\frac{1}{\mathcal{A}}\frac{\partial^{2}\Phi(\theta)}{\partial b^{2}} .\frac{\partial\Phi(\theta)}{\partial a}
     - \frac{1}{\mathcal{A}}\frac{\partial^{2}\Phi(\theta)}{\partial a\partial b}.\frac{\partial\Phi(\theta)}{\partial b}& \hbox{} \\
    \dot{b}= -\frac{1}{\mathcal{A}}\frac{\partial^{2}\Phi(\theta)}{\partial a\partial b}.\frac{\partial\Phi(\theta)}{\partial a}
     +\frac{1}{\mathcal{A}}\frac{\partial^{2}\Phi(\theta)}{\partial a^{2}}.\frac{\partial\Phi(\theta)}{\partial b}& \hbox{}
  \end{array}
\right.
\end{equation*}
where \begin{eqnarray*}
  \Phi(\theta) &=&
  \frac{b^{2}}{12a^{2}x}\left(a^{-b}x^{b}-1\right)^{4}+\frac{b^{2}}{3a^{2}x}a^{-b}x^{b}-\frac{b^{2}}{12a^{2}x}.
\end{eqnarray*} and \[\displaystyle {\it x}={\it RootOf} \left( 2\,b\,{a}^{-b}{{\it \_Z}}^{b}-2\,b-2\,{a}^{-b}{{\it \_Z}}^{b}a\,\ln  \left( a \right) \\
\mbox{}+2\,{a}^{-b}{{\it \_Z}}^{b}a\,\ln  \left( {\it \_Z} \right)
-2\,\ln  \left( {\it \_Z} \right) a+2\,\ln \left( a \right) a-a
\right)\]
\end{proposition}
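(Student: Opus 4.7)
The plan is to apply the general gradient-system formula~(\ref{e2}) from the preliminaries, specialized to the logit Weibull manifold whose information metric and its inverse are already computed in Proposition~\ref{pr1}. So the argument is essentially a substitution together with a matrix--vector multiplication, together with a verification that the sign bookkeeping from equation~(\ref{e2}) is consistent with the result displayed in the statement.

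First I would recall that $S'$ is a logit manifold admitting the potential function $\Phi(\theta)$ constructed in the preceding theorem, together with the implicit constraint ${x}={\it RootOf}(\cdots)$ which makes $x$ a function of $(a,b)$ only. Because $\Phi(\theta)$ is a genuine potential (in the sense of Amari), the Fisher information metric of $S'$ coincides with its Hessian, and the inverse metric takes the explicit form given in Proposition~\ref{pr1}. I will then quote the gradient-system formula $\dot{\overrightarrow{\theta}}=-G^{-1}\,\partial_{\theta}\Phi(\theta)$ from equation~(\ref{e2}), which is valid for any Riemannian (in particular dually flat) statistical manifold.

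Next I would perform the matrix--vector product componentwise. Writing
\begin{equation*}
\begin{pmatrix}\dot a\\ \dot b\end{pmatrix}
=-\,I^{-1}(\theta)\begin{pmatrix}\partial_a\Phi(\theta)\\ \partial_b\Phi(\theta)\end{pmatrix},
\end{equation*}
and substituting the explicit entries
$I^{-1}_{11}=-\mathcal{A}^{-1}\partial_{bb}\Phi$, $I^{-1}_{12}=I^{-1}_{21}=\mathcal{A}^{-1}\partial_{ab}\Phi$, $I^{-1}_{22}=-\mathcal{A}^{-1}\partial_{aa}\Phi$, the overall minus sign of~(\ref{e2}) combines with the minus signs inside $I^{-1}(\theta)$ to produce exactly the system stated in Proposition~\ref{pr2}, i.e.\ the first and last terms get a plus and the off-diagonal cross-terms get a minus. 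Balancing signs carefully is the only non-cosmetic point here, and it is the step where a slip is most likely.

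The main (and essentially only) obstacle is the implicit dependence of $x$ on $(a,b)$ through the \textit{RootOf} relation: strictly speaking, the partial derivatives $\partial_a\Phi$, $\partial_b\Phi$, $\partial_{aa}\Phi$, $\partial_{ab}\Phi$, $\partial_{bb}\Phi$ should be computed by implicit differentiation, using $\partial_a x$ and $\partial_b x$ obtained from differentiating the defining equation of $x$. I would note that this affects only the concrete expressions of the entries $\partial^2\Phi/\partial a\partial b$ etc.\ and not the structural form of the gradient system, since Amari's dual-coordinate machinery (invoked in Proposition~\ref{pr1}) guarantees that the Hessian of $\Phi$ really is the Fisher metric irrespective of how the coordinates are expressed. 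Hence the algebraic identity above is preserved, and the proof is completed by pointing to~(\ref{e2}) and Proposition~\ref{pr1}.
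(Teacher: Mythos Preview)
Your proposal is correct and follows exactly the paper's own approach: the paper's proof consists of a single line invoking equation~(\ref{e2}) together with Proposition~\ref{pr1}. Your additional remarks on sign bookkeeping and the implicit dependence of $x$ through the \textit{RootOf} relation go beyond what the paper writes, but they are consistent with (and more detailed than) the paper's argument.
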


\begin{proof}
Using (\ref{e2}), and the proposition \ref{pr1} we have the result.
\end{proof}

\vfill\eject

\section{General conclusion}\label{sec8}
In this paper we asked whether there exists a gradient system
defined on the variety constructed from Weibull distributions. We
have shown that there is no function on this variety to construct a
gradient system. But that there is a hybrid Weibull model based on
the choice of the Weibull probability. On the variety defined from
this new Weibull density, which we have called the logit density, we
have shown that there is a gradient system on this variety. Since we
are in dimension and by applying the Fujiwara \cite{Dy} and Nakamura
\cite{nakamura} results we can show that it is a Hamiltonian system
and completely integrable by apply the Liouville theorem \cite{Li}.

\backmatter

\bmhead{Supplementary information} This manuscript has no additional
data.

\bmhead{Acknowledgments} I would like to thank all the members of
the Algebra and Geometry Laboratory of the University of
Yaound$\acute{e}$ 1.
\section*{Declarations}
This article has no conflict of interest to the journal. No
financing with a third party.
\begin{itemize}
\item No Funding
\item No Conflict of interest/Competing interests (check journal-specific guidelines for which heading to use)
\item  Ethics approval
\item  Consent to participate
\item  Consent for publication
\item  Availability of data and materials
\item  Code availability
\item Authors' contributions
\end{itemize}

\noindent



\bigskip\noindent

\bibliography{sn-bibliography}

\end{document}